\newcommand{\cal}[1]{\mathscr{#1}}
\newcommand{\scr}[1]{\mathcal{#1}}
\newcommand{\fk}[1]{\mathfrak{#1}}
\newcommand{\bC}{\mathbb{C}}
\newcommand{\bZ}{\mathbb{Z}}
\newcommand{\rank}{\mathrm{rank}}
\newtheorem{thm}{Theorem}[section]
\newtheorem*{thm*}{Theorem}
\newtheorem{lem}[thm]{Lemma}
\newtheorem{rem}[thm]{Remark}
\numberwithin{equation}{section}
\title[Image of KZ Functor]{The image of the KZ functor for Cherednik algebras of varieties with finite group actions}
\author{Daniel Thompson}
\email{dthomp@math.mit.edu}
\date{\today}
\begin{document}
\begin{abstract}
We prove that the KZ functor from a certain category of modules for the Cherednik algebra to finite dimensional modules over the Hecke algebra is essentially surjective.  Then we begin to use this result to study the analog of category $\scr{O}$ for Cherednik algebras on Riemann surfaces and on products of elliptic curves.  In particular we give conditions on the parameters under which these categories are nonzero.
\end{abstract}
\maketitle
\section{Introduction}
Let $W$ be a finite complex reflection group inside of $GL(V)$.  The paper \cite{BMR} has defined a Hecke algebra $H$ associated to $W$, which is a deformation of the group algebra $\bC W$.  An important open question is whether this deformation is flat; in particular is it finite dimensional for any choice of the deformation parameters.  Recent work of I. Losev, \cite{Lo14}, has made progress on this question using the theory of category $\scr{O}$ for rational Cherednik algebras.  Namely, he has shown that the KZ functor from $\scr{O}$ to the category of finite-dimensional representations of $H$ is essentially surjective, which implies that there is a minimal two-sided ideal of finite codimension in $H$.  Let us note also that category $\scr{O}$ is the category of modules over the Cherednik algebra which are finitely generated over the subalgebra $\bC[V]$ and for which every irreducible composition factor has regular singularities at $\infty$ when viewed as a $D$-module on a dense open set of its support (Proposition 1.3 of \cite{Wi11}).

There is a more general theory of Cherednik and Hecke algebras for varieties with a finite group action introduced by P. Etingof in a preprint \cite{Et04}.  It is natural to ask whether the KZ functor in this setting is surjective as well.  This question in answered in the affirmative by
\begin{thm} \label{thm:main}
  Let $V$ be a finite-dimensional representation of the Hecke algebra of a variety with a finite group action.  There is an $\cal{O}$-coherent sheaf of modules $M$ over the sheaf of Cherednik algebras such that $KZ(M)=V$.
\end{thm}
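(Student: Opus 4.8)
The plan is to reverse the construction of $KZ$: starting from the $H$-module $V$, we build an $\cal{O}_X$-coherent $H_c$-module $M$ whose restriction to the regular locus recovers $V$. Let $X^{\mathrm{reg}}\subseteq X$ be the open subset on which $G$ acts freely, so that $X^{\mathrm{reg}}$ avoids the reflection hypersurfaces, $H_c|_{X^{\mathrm{reg}}}\cong\cal{D}_{X^{\mathrm{reg}}}\rtimes G$, and $KZ$ is by definition ``restrict to $X^{\mathrm{reg}}$, descend along $X^{\mathrm{reg}}\to X^{\mathrm{reg}}/G$, take the monodromy representation, and observe that it factors through $H$''. Pulled back along $X^{\mathrm{reg}}\to X^{\mathrm{reg}}/G$, the module $V$ becomes a $G$-equivariant local system $\cal{L}$ on $X^{\mathrm{reg}}$ whose monodromy around (each component of) each reflection hypersurface $D_i$ satisfies the order-$e_i$ Hecke relation $\prod_{j}(T_i-q_{ij})=0$. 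It then suffices to produce a single $\cal{O}_X$-coherent $H_c$-module $M$ with $M|_{X^{\mathrm{reg}}}\cong\cal{L}$ as $\cal{D}_{X^{\mathrm{reg}}}\rtimes G$-modules; for such an $M$ one gets $KZ(M)=V$ automatically. (In general there will be several such $M$, differing by modifications supported on the singular locus; we need only existence.)

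\emph{Extension across the reflection hypersurfaces.} Set $Z=X\setminus X^{\mathrm{reg}}\setminus D$, a closed subvariety of codimension $\ge2$, and first work over $X\setminus Z$. Near the generic point of a reflection hypersurface $D_i$ the stabilizer is the cyclic group $\langle g_i\rangle\cong\bZ/e_i$ and the problem becomes the elementary rank-one situation: decompose $\cal{L}$ near $D_i$ into generalized $q_{ij}$-eigenbundles for the local monodromy and extend the $j$-th summand to a logarithmic lattice along $D_i$ whose residue eigenvalues are normalized to lie in the fundamental domain prescribed by the $\bZ/e_i$-calculation, i.e.\ congruent modulo $\bZ$ to the combination of the Cherednik parameter along $D_i$ and $j/e_i$ dictated by the Hecke--Cherednik parameter dictionary of \cite{Et04}. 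This is a Deligne canonical extension with a fixed choice of lattice; it is $G$-equivariant because $\cal{L}$ is and the normalization is $G$-invariant, it is $\cal{O}$-coherent, and the rank-one computation shows precisely that the Dunkl operators preserve it. Thus one obtains an $\cal{O}$-coherent $H_c|_{X\setminus Z}$-module $M^{\circ}$ restricting to $\cal{L}$ on $X^{\mathrm{reg}}$.

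\emph{Extension across the codimension $\ge 2$ locus.} As $M^{\circ}$ is locally free it extends uniquely to a reflexive $\cal{O}_X$-module $M$ across $Z$; the point is to upgrade the $H_c$-action. Fix $x\in Z$ with stabilizer $G_x$ acting on $\fk{h}_x:=T_xX$. By Etingof's completion theorem \cite{Et04}, the completion of $H_c$ at the image of $x$ is Morita equivalent to the completion of the rational Cherednik algebra of $(G_x,\fk{h}_x)$, compatibly with the respective KZ functors and Hecke algebras; under this equivalence the restriction of $\widehat{M}_x$ to the punctured formal neighborhood of $x$ corresponds to a finite-dimensional module over the completed Hecke algebra of $(G_x,\fk{h}_x)$ --- the ``local component'' of $V$ at $x$. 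By Losev's theorem \cite{Lo14} applied to $(G_x,\fk{h}_x)$ this local Hecke module is $KZ(N_x)$ for some module $N_x$ in the corresponding category $\scr{O}$; transport $N_x$ back through the Morita equivalence to a completed $H_c$-module $\widetilde{N}_x$ and glue $\widetilde{N}_x$ onto $M^{\circ}$ along the punctured neighborhood. The gluing data matches there because both restrict to $\cal{L}$ and, as in the rank-one case, an $\cal{O}$-coherent $H_c$-module on a punctured formal neighborhood is pinned down by its monodromy together with the residue window already fixed above. Carrying this out all along $Z$ --- a formal (Beauville--Laszlo-type) patching along the subscheme $Z$, which is harmless since $Z$ has codimension $\ge2$ and hence disturbs neither $M^{\circ}$ in codimension $\le 1$ nor the identification $M|_{X^{\mathrm{reg}}}=\cal{L}$ --- produces a global $\cal{O}_X$-coherent $H_c$-module $M$ with $KZ(M)=V$.

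\emph{Main obstacle.} The delicate part is the compatibility of the choices. First, the residue normalizations along the $D_i$ must be made simultaneously $G$-equivariantly and coherently along mutually intersecting reflection hypersurfaces, where \emph{resonances} --- integral, or suitably rational, differences among the Cherednik parameters and the $j/e_i$ --- obstruct the naive choice; this resonance phenomenon is exactly what prevents $KZ$ from being an equivalence and keeps the statement from being formal. Second, matching the Deligne lattice near codimension one with Losev's local module near $Z$ needs the two to agree on the overlap, which rests on the fact (itself part of the rank-one analysis) that an $\cal{O}$-coherent $H_c$-module on a punctured disc is determined by its monodromy once the allowed residue window is fixed; when $\cal{L}$ has non-semisimple local monodromy across a resonance the extension is genuinely non-unique, so one must commit to a single canonical recipe --- the Deligne--Malgrange lattice with residue eigenvalues in the chosen fundamental domain --- and verify that \emph{this} lattice is $H_c$-stable. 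That verification, which reduces to the rank-one computation together with Losev's input at the punctual strata, is the technical heart of the proof.
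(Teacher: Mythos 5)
Your overall strategy -- view $V$ as a $W$-equivariant local system on $X^{\mathrm{reg}}$, extend it across the reflection hypersurfaces by a transverse rank-one analysis, then extend across the remaining codimension $\geq 2$ locus -- is the same as the paper's (which follows Losev), but both of your two steps have genuine gaps at exactly the points where the real content lies. In the codimension-1 step, the lattice you propose (Deligne/Deligne--Malgrange extension of each generalized monodromy eigenbundle with residue eigenvalues in a fixed fundamental domain) is in general \emph{not} stable under the Dunkl operators. The transverse cyclic-group computation forces the residue to act on each $W_Z$-isotypic component of the special fiber by an exact prescribed scalar (not merely a scalar modulo $\bZ$); when two Hecke eigenvalues along $Z$ coincide and the local monodromy has a nontrivial Jordan block, the required lattice has residue with \emph{distinct} eigenvalues (differing by a nonzero integer) on a single monodromy generalized eigenspace, which the Deligne lattice by construction cannot have. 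The existence of a Dunkl-stable lattice in the resonant case is precisely the essential surjectivity of KZ for the rank-one (cyclic) rational Cherednik algebra, which the paper invokes via Theorem 5.14 of \cite{GGOR} together with Losev's explicit construction (the span of sections flat along the hypersurface direction lying in the $0$-generalized eigenspace for $\fk{h}_{W'}$, intersected with $e(W')\zeta_Z^* N$). You flag this verification as ``the technical heart'' but do not carry it out, and for your chosen lattice it would in fact fail.

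The codimension $\geq 2$ step is both harder than necessary and not correct as stated. The paper simply applies Lemma 3.6 of \cite{Wi11}: a coherent module over the Cherednik algebra defined off a closed subset of codimension $\geq 2$ extends to a coherent module on all of $X$, and the $H_{c,\eta}$-action comes along for free (indeed, if your $M^{\circ}$ were reflexive, $j_*M^{\circ}$ is already coherent and is automatically an $H_{c,\eta}$-module by functoriality of pushforward, so the ``upgrading'' problem you set up does not arise). Your alternative -- Etingof's completion theorem plus Losev's theorem at each $x\in Z$, glued by a Beauville--Laszlo argument -- hinges on the claim that an $\cal{O}$-coherent Cherednik module on a punctured (formal) neighborhood is determined by its monodromy plus residue normalization. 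This is false in rank $>1$: the KZ functor has a kernel consisting of modules supported on proper strata, so the module transported from Losev's $N_x$ (which is a pure existence statement, with no canonical choice) need not agree with the completion of $M^{\circ}$ on the overlap, and nothing in your argument forces it to. Moreover $Z$ is in general positive-dimensional, so one would need the choices $N_x$ to vary coherently along $Z$, which the pointwise application of Losev's theorem does not provide. Replacing this step by the coherent-extension lemma of \cite{Wi11} (or by the reflexive pushforward argument above) closes the gap and is how the paper concludes.
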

If our variety is proper, then any such $M$ has regular singularities in the sense above.  We suspect that for any variety we can choose some such $M$ which has regular singularities, and plan to return to this question in the future.

The structure of the paper is as follows.  First, we briefly recall the basic theory we will need of global Cherednik algebras in section 2.  In section 3 we extend the construction of Losev to the setting of Cherednik algebras of varieties, proving Theorem \ref{thm:main}.  In sections 4-6 we begin to use these results to study the analog of category $\scr{O}$ for Cherednik algebras on Riemann surfaces and on products of elliptic curves.  In particular we give conditions on the parameters under which these categories are nonzero.

\subsection*{Acknowledgements}
The author would like to thank Pavel Etingof and Seth Shelley-Abrahamson for their helpful comments on preliminary versions of this paper.  Thanks are due especially to Etingof for suggesting this topic and for his patience and numerous useful discussions.  This material is based upon work supported by the National Science Foundation Graduate Research Fellowship Program under Grant No. 1122374.

\section{Generalities}
\subsection{Cherednik algebras of varieties with a finite group action}
Let $X$ be a smooth connected algebraic variety over $\bC$, with a finite group $W$ of automorphisms.  Assume further that $X$ admits a cover by affine $W$-invariant open sets, so that the quotient variety $X/W$ exists.  Let $c$ be a $W$-invariant $\bC$-valued function on the set $S$ of pairs $(Z,s)$, where $s\in W$ and $Z$ is an irreducible component of the set of fixed points $X^s$ of codimension 1 in $X$.  These elements $s\in W$ will be called \emph{reflections}.  Finally, let $\eta$ be a $W$-invariant $\bC$-valued function on the set of hypersurfaces $Z$ such that $(Z,s)\in S$ for some $s\in W$.  Etingof in \cite{Et04} has defined a sheaf of algebras $H_{c,\eta}(W,X)$ on $X/W$ called the \emph{Cherednik algebra} associated to this data.  We will consider the full subcategory $H_{c,\eta}-\mathrm{mod}_{\mathrm{coh}}$ of $H_{c,\eta}-\mathrm{mod}$, consisting of sheaves of modules which are coherent as $\cal{O}_X$-modules (in the sequel, an $\cal{O}$-\emph{coherent} module over the Cherednik algebra will always mean coherent over $\cal{O}_X$).  Our main object of interest will be a certain Serre subcategory $H_{c,\eta}-\mathrm{mod}_{\mathrm{RS}}$ of $H_{c,\eta}-\mathrm{mod}_{\mathrm{coh}}$, see Definition 1.5 of \cite{Wi11}.  These are $\cal{O}$-coherent sheaves of modules for which each irreducible composition factor has regular singularities considered as an equivariant local system on a dense open set of its support in $X$.  Note we are employing what Etingof calls the ``modified Cherednik algebra'' $H_{1,c,\eta,0,X,W}$, defined in Section 2.10 of \cite{Et04}.

For convenience we recall the definition from \cite{Et04} of the \emph{Hecke algebra} $\cal{H}_\tau(W,X)$ of $X,W$, which is a deformation of the group algebra of the orbifold fundamental group.  Let $X^{\mathrm{reg}}$ be the set of points of $X$ with trivial stabilizer, an open set.  For $(Z,s)\in S$, let $n_Z$ be the order of the pointwise stabilizer of $Z$ in $G$.  Choosing a basepoint $x_0\in X^{\mathrm{reg}}$, the braid group of $X/W$ is defined to be the fundamental group $\pi_1(X^{\mathrm{reg}}/W,x_0)$.  Let $C_Z$ be the conjugacy class in $\pi_1(X^{\mathrm{reg}}/W,x_0)$ represented by a small circle going counterclockwise around the image of $Z$ in $X/W$.  Now for any conjugacy class of hypersurfaces $Z$ we introduce parameters $\tau_{1Z}, \ldots, \tau_{n_Z Z} \in \bC$.  The Hecke algebra is defined to be the quotient of the group algebra of the braid group by the relations
$$\prod_{j=1}^{n_Z} (T-e^{2\pi i j/n_Z} e^{\tau_{j Z}}) = 0, \quad T\in C_Z.$$

Let us now recall the Knizhnik-Zamolodchikov functor.  On the complement $X'$ of all the hypersurfaces $Z$, the sheaf $H_{c,\eta}$ localizes to $\bC[W] \ltimes D_{X'}$.  In particular, the restriction to $X^{\mathrm{reg}}/W$ of a module $M\in H_{c,\eta}-\mathrm{mod}_{\mathrm{coh}}$ is a $W$-equivariant local system on $X^{\mathrm{reg}}$.  Taking monodromy gives a functor to the category of finite-dimensional representations of the braid group $\pi_1(X^{\mathrm{reg}}/W,x_0)$.  Etingof shows in Proposition 3.4 of \cite{Et04} that the composition of restriction and this equivalence lands in $\cal{H}_\tau-\mathrm{mod}$, the category of finite-dimensional representations of the Hecke algebra $\cal{H}_{\tau(c,\eta)}(W,X)$, where
$$\tau_{j Z}=\frac{2\pi i}{n_Z}\left( \sum_{m=1}^{n_Z-1} 2c(Z,s_Z^m) \frac{1-e^{2\pi i j m/n_Z}}{1-e^{-2\pi i m/n_Z}} +\eta(Z)\right).$$
We denote this functor as 
$$KZ: H_{c,\eta}-\mathrm{mod}_{\mathrm{coh}} \to \cal{H}_\tau-\mathrm{mod},$$
just as we do for its restriction to $H_{c,\eta}-\mathrm{mod}_{\mathrm{RS}}$,
$$KZ: H_{c,\eta}-\mathrm{mod}_{\mathrm{RS}} \to \cal{H}_\tau-\mathrm{mod}.$$
These functors are both exact.

\subsection{Isomorphisms of \'etale lifts}
If $\zeta:U\to X/W$ is an \'etale map, then the pullback $\zeta^* H_{c,\eta}(W,X)$ has a natural structure of a sheaf of algebras on $U$, which is affine-locally generated by $\zeta^* \cal{O}_X$, $\bC[W]$, and the Dunkl operators. S. Wilcox \cite{Wi11}, in Proposition 2.3, shows that if now $\tilde{\zeta}:\tilde{U} \to X$ is a $W$-equivariant \'etale map, then the image of each component $Z$ of $\tilde{U}^s$ of codimension 1 is a component of $X^s$ of codimension 1.  Writing $\zeta:\tilde{U}/W \to X/W$, he shows that $\zeta^* H_{c,\eta}(W,X)$ is naturally isomorphic to $H_{c,\eta}(W,\tilde{U})$ where by abuse of notation $c$ (resp. $\eta$) is the appropriate restriction of $c$ (resp. $\eta$).

Similarly if $U$ is a Stein complex analytic manifold with an \'etale map $U\to X/W$, then $\bC_{an}[U]\otimes_{\cal{O}_{X/W}} H_{c,\eta}(W,X)$ is naturally an algebra.

Let $(Z,s)\in S$ and let $W'=W_Z$ be the subgroup of elements of $W$ which act trivially on $Z$.  Now $W'$ is cyclic and a generic point of $Z$ has $W'$ as its stabilizer.  Set $X^{\mathrm{reg}-W'} = \{x\in X\, |\, W_x \subset W'\}$.  This is an open set of $X$, and the natural morphism $\zeta_Z: X^{\mathrm{reg}-W'}/W' \to X/W$ is \'etale.  Let us define the centralizer algebra, $Z(W,W',H_{c,\eta}(W',X^{\mathrm{reg}-W'}))$.  This is a sheaf of algebras on $X^{\mathrm{reg}-W'}/W'$ defined by $\cal{E}nd_{H_{c,\eta}(W',X^{\mathrm{reg}-W'})^{op}}(P)$ where $P$ is the right $H_{c,\eta}(W',X^{\mathrm{reg}-W'})$-module $\cal{F}un_{W'}(W,H_{c,\eta}(W',X^{\mathrm{reg}-W'}))$ of $W'$ invariant $H_{c,\eta}(W',X^{\mathrm{reg}-W'})$-valued functions on $W$.  There is an element $e(W')\in \Gamma(X^{\mathrm{reg}-W'}/W',Z(W,W',H_{c,\eta}(W',X^{\mathrm{reg}-W'}))$ such that for all $f\in \Gamma(U,P)$, $e(W')f(u) = f(u)$ if $u\in W'$ and 0 otherwise.  Then $e(W')Z(W,W',H_{c,\eta}(W',X^{\mathrm{reg}-W'}))e(W')$ is naturally identified with \linebreak $H_{c,\eta}(W',X^{\mathrm{reg}-W'})$.

In parallel to Lemma 2.1 of \cite{Lo14}, we have the following lemma.
\begin{lem} \label{isom}
There is a natural isomorphism of $\cal{O}_{X^{\mathrm{reg}-W'}/W'}$-quasicoherent sheaves of algebras on $X^{\mathrm{reg}-W'}/W'$
$$\theta: \zeta_Z^* H_{c,\eta}(W,X) \to Z(W,W',H_{c,\eta}(W',X^{\mathrm{reg}-W'})).$$
Moreover, we have a Morita equivalence $$Z(W,W',H_{c,\eta}(W',X^{\mathrm{reg}-W'}))-\mathrm{mod} \to H_{c,\eta}(W',X^{\mathrm{reg}-W'})-\mathrm{mod}$$ given by $M\mapsto e(W')M$.
\end{lem}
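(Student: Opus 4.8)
The plan is to follow the strategy of Lemma 2.1 of \cite{Lo14}: exhibit $\zeta_Z$ as (the quotient of) a $W$-equivariant \'etale map so that Proposition 2.3 of \cite{Wi11} applies, and then recognize the resulting Cherednik algebra of a disconnected variety as the centralizer algebra by means of a full idempotent. Write $A = H_{c,\eta}(W',X^{\mathrm{reg}-W'})$ and $Y_0 = X^{\mathrm{reg}-W'}$, a $W'$-stable open subset of $X$. Put $\tilde U = (W\times Y_0)/W'$, where $W'$ acts by $w'\cdot(w,x) = (w(w')^{-1},\,w'x)$ and $W$ acts by left translation on the $W$-factor. The map $\tilde\zeta\colon\tilde U\to X$, $(w,x)\mapsto wx$, is well defined, $W$-equivariant, and \'etale---on each of its $|W/W'|$ connected components $Y_g$ ($g\in W/W'$) it is the open immersion $Y_0\hookrightarrow X$ composed with an automorphism of $X$---and $\tilde U/W = Y_0/W' = X^{\mathrm{reg}-W'}/W'$, the induced morphism to $X/W$ being $\zeta_Z$. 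Hence Proposition 2.3 of \cite{Wi11} furnishes a canonical isomorphism of sheaves of algebras $\zeta_Z^* H_{c,\eta}(W,X)\cong H_{c,\eta}(W,\tilde U)$ on $X^{\mathrm{reg}-W'}/W'$, in which the pulled-back parameters restrict on the distinguished component $Y_e$ to exactly the data denoted $c,\eta$ for $Y_0$.

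It remains to identify $H_{c,\eta}(W,\tilde U)$ with $Z(W,W',A)$. The $|W/W'|$ components of $\tilde U$ are permuted transitively by $W$ with $\mathrm{Stab}_W(Y_e)=W'$, so $H_{c,\eta}(W,\tilde U)$ contains the orthogonal idempotents $e_g\in\cal{O}_{\tilde U}$ projecting onto $Y_g$, with $\sum_g e_g=1$; set $e(W')=e_e$. The key local point is the identity $e(W')H_{c,\eta}(W,\tilde U)e(W')\cong A$: conjugating by $e_e$ annihilates every element of $W$ not preserving $Y_e$ and every Dunkl correction term indexed by a reflection whose codimension-one fixed component meets some $Y_g$ with $g\ne e$, while the pairs $(Z',s)$ with $s\in W'$ and $Z'$ a codimension-one component of $(Y_0)^s$ are precisely the reflection data of $(W',Y_0)$, with matching parameters; thus $e(W')H_{c,\eta}(W,\tilde U)e(W')$ is generated by $\cal{O}_{Y_0}$, $\bC[W']$, and the Dunkl operators of $(W',Y_0)$, i.e.\ equals $A$. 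Moreover $H_{c,\eta}(W,\tilde U)e(W') = \bigoplus_{g\in W/W'} e_g H_{c,\eta}(W,\tilde U)e_e$ is a free right $A$-module with each summand of rank one (a generator being any lift of $g$ to $W$), so $H_{c,\eta}(W,\tilde U)e(W')\cong\cal{F}un_{W'}(W,A)=P$ as right $A$-modules; and $e(W')$ is a full idempotent because $e_g = w_g\,e(W')\,w_g^{-1}$ for a lift $w_g\in W$ of $g$, whence $1=\sum_g e_g\in H_{c,\eta}(W,\tilde U)\,e(W')\,H_{c,\eta}(W,\tilde U)$. The general fact that a full idempotent $e$ in a sheaf of algebras $R$ gives $R\cong\cal{E}nd_{(eRe)^{op}}(Re)$ by left multiplication now yields $H_{c,\eta}(W,\tilde U)\cong\cal{E}nd_{A^{op}}(P)=Z(W,W',A)$; composing with the \cite{Wi11} isomorphism defines $\theta$, which is natural in $X^{\mathrm{reg}-W'}/W'$ and hence glues to the asserted global isomorphism.

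The Morita equivalence then drops out: under $\theta$ the idempotent $e(W')$ of the lemma corresponds to the full idempotent $e_e$, and the identification recalled just above the lemma gives $e(W')Z(W,W',A)e(W')\cong A$, so by the standard theory of full idempotents the functor $M\mapsto e(W')M$ is an equivalence $Z(W,W',A)-\mathrm{mod}\to A-\mathrm{mod}$; the argument is local on $X^{\mathrm{reg}-W'}/W'$ with $Z(W,W',A)e(W')$ a progenerator over $A$, so it applies to sheaves of modules. The step I expect to be the main obstacle is the local computation of $e(W')H_{c,\eta}(W,\tilde U)e(W')$: one must work in an affine $W$-stable chart and verify, from the explicit form of the Dunkl operators (including the $\eta$-terms) and of the monodromy parameters $\tau$, that no cross-terms between distinct components survive the sandwiching by $e_e$ and that the reflection data and parameters of $(W,\tilde U)$ restrict verbatim to those of $(W',Y_0)$---in other words that everything is controlled by $\mathrm{Stab}_W(Y_e)=W'$. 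Checking that $\tilde\zeta$ is $W$-equivariant \'etale and that $\tilde U/W$ recovers $\zeta_Z$, together with the idempotent and Morita bookkeeping, is routine and transfers to sheaves of algebras without change.
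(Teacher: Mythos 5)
Your proposal is correct in outline but follows a genuinely different route from the paper. The paper treats the map $\theta$ as already given (on the generators $\zeta_Z^*\cal{O}_X$, $\bC[W]$ and the Dunkl operators) and only verifies that it is an isomorphism, by reducing to formal neighborhoods of closed points of $X^{\mathrm{reg}-W'}/W'$ and quoting Proposition 2.6 of \cite{Wi11}, the formal-local description of the Cherednik algebra as a centralizer algebra; the Morita statement is then left as standard. You instead realize $\zeta_Z$ through the induced $W$-equivariant \'etale cover $\tilde U = W\times_{W'}X^{\mathrm{reg}-W'}\cong (X^{\mathrm{reg}-W'}/W')\times_{X/W}X$, apply Proposition 2.3 of \cite{Wi11}, and identify $H_{c,\eta}(W,\tilde U)$ with $Z(W,W',H_{c,\eta}(W',X^{\mathrm{reg}-W'}))$ by a full-idempotent argument. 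This buys an explicit global construction of $\theta$ and makes the fullness of $e(W')$, hence the Morita equivalence, completely transparent; the paper's route is shorter because all local analysis is outsourced to Wilcox's completion result, but it presupposes the natural map. Two points in your version still need care: (i) Etingof's definition and Wilcox's Proposition 2.3 are formulated for connected $X$, so you must either extend them (routinely, componentwise) to the disconnected cover $\tilde U$ or take the centralizer algebra as the definition of $H_{c,\eta}(W,\tilde U)$; and (ii) in the sandwiching computation, a Dunkl term attached to a pair $(Z',s)$ with $s\in W'$ but $Z'$ lying in a component $Y_g$, $g\neq e$, is not annihilated by $e(W')$ --- it survives as a regular function times $(s-1)$ --- so the correct assertion is that $e(W')H_{c,\eta}(W,\tilde U)e(W')$ and $H_{c,\eta}(W',X^{\mathrm{reg}-W'})$ coincide as subalgebras (the discrepancies being absorbed by $\cal{O}\cdot\bC[W']$), not that all cross-terms vanish. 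You flag this local computation as the main obstacle, and with the correction just indicated it goes through.
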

\begin{proof}
  To prove that this map is an isomorphism it suffices to check that it is when restricted to formal neighborhoods of closed points of $X^{\mathrm{reg}-W'}/W'$.  Now the result follows from Proposition 2.6 of \cite{Wi11}.
\end{proof}

\section{The KZ functor is essentially surjective}
\subsection{}
Let $V$ be a finite-dimensional $\cal{H}_\tau$-module, and let $N$ denote the corresponding $W$-equivariant local system on $X^{\mathrm{reg}}$ with regular singularities.  Following Losev \cite{Lo14} we will produce a module $M\in H_{c,\eta}-\mathrm{mod}_{\mathrm{coh}}$ whose restriction to $X^{\mathrm{reg}}/W$ is isomorphic to $N$.

Let $(Z,s)\in S$ and let $W' = W_Z$ be the subgroup of elements of $W$ which act trivially on $Z$.  Using Lemma \ref{isom}, we may view $e(W')\zeta_Z^* N$ as a module over $H_{c,\eta}(W',X^{\mathrm{reg}-W'})$.

\subsection{Extension to codimension 1}
Choose $U$ a $W'$-stable complex analytic neighborhood in $X^{\mathrm{reg}-W'}$ which intersects $Z$ but no other component of $X^{W'}$.  The action of $W'$ on $U$ is linear in some chart $\fk{h}$ (a complex vector space), so we may take an analytic disk $Y\subset Z$, a unique $W'$-stable complement $\fk{h}_{W'}$ to $\fk{h}^{W'}$, and a disk $D\subset \fk{h}_{W'}$ so that $\hat{Y}:= Y\times D \subset U$.  Of course, $\hat{Y}$, as well as $\hat{Y}^\times := \hat{Y} \setminus Y$ are Stein so we may consider \'etale pullbacks to these neighborhoods.

Note that we have an isomorphism
\begin{equation} \label{eq:isom}
  \bC_{an}[\hat{Y}/W'] \otimes_{\cal{O}_{X^{\mathrm{reg}-W'}/W'}} H_{c,\eta}(W',X^{\mathrm{reg}-W'}) \simeq \bC_{an}[\hat{Y}/W'] \otimes_{\bC[\fk{h}/W']} H_{c,\eta}(W',\fk{h}),
\end{equation}
where $H_{c,\eta}(W',\fk{h})$ is the (modified) rational Cherednik algebra.
Moreover, the second algebra coincides with
$$\bC_{an}[\hat{Y}/W'] \otimes_{\bC[\fk{h}_{W'}/W'] \otimes \bC[Y]} (H_{c,\eta}(W',\fk{h}_{W'}) \otimes D_Y).$$

Set $N_Z = e(W')(\bC_{an}[\hat{Y}^\times/W']\otimes_{\cal{O}_{X^{\mathrm{reg}}/W}} N)$.  This is a $W'$-equivariant meromorphic local system on $\hat{Y}^\times$ with regular singularities on $Y$.  Let $M_Z$ be the $\bC_{an}[\hat{Y}]$-span of the set of meromorphic sections of $N_Z$ which are annihilated by vector fields of $Y$ and which lie in the $0$-generalized eigenspace for the action of $\fk{h}_{W'}$ under the isomorphism (\ref{eq:isom}) above.  Now $M_Z$ is a module over $\bC_{an}[\hat{Y}/W']\otimes_{\cal{O}_{X^{\mathrm{reg}-W'}/W'}} H_{c,\eta}(W',X^{\mathrm{reg}-W'})$ which is coherent over $\bC_{an}[\hat{Y}]$.  Moreover, $\bC_{an}[\hat{Y}^\times] \otimes_{\bC_{an}[\hat{Y}/W']} M_Z = N_Z$, as can be checked fiberwise using the fact that the KZ functor is essentially surjective for rational Cherednik algebras of cyclic groups (this is true by Theorem 5.14 of \cite{GGOR} because the Hecke algebra is finite dimensional in this case).

Now we let $\tilde{N}_Z = M_Z \cap e(W')\zeta_Z^* N,$ the intersection taken inside of $N_Z$.  Viewing $Z$ inside of $X^{\mathrm{reg}-W'}/W'$ and covering $Z$ with open sets, we see that this definition glues to a well-defined sheaf on $X^{\mathrm{reg}-W'}/W'$.  This is a $H_{c,\eta}(W',X^{\mathrm{reg}-W'})$-submodule of $e(W')\zeta_Z^* N$.
\begin{lem}
The sheaf of modules $\tilde{N}_Z$ is $\cal{O}$-coherent away from the set of components of $X^{W'}$ other than $Z$, and it restricts to $e(W')\zeta_Z^* N$ away from $Z$.
\end{lem}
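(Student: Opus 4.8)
The plan is to verify both assertions locally, by passing to the analytic neighborhoods $\hat Y$ introduced above, where the Cherednik algebra decomposes as a tensor product of a rational Cherednik algebra for the cyclic group $W'$ acting on $\fk{h}_{W'}$ and a ring of differential operators on $Y$. Away from $Z$ and the other components of $X^{W'}$, the group $W'$ acts freely, so $H_{c,\eta}(W',X^{\mathrm{reg}-W'})$ localizes to $\bC[W']\ltimes D$ and $e(W')\zeta_Z^* N$ localizes (after applying the Morita equivalence of Lemma \ref{isom}) to the $W'$-equivariant local system $N$ itself, which is $\cal{O}$-coherent. Thus the only work is near a generic point of $Z$.

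First I would treat the restriction statement. Fix a point of $\hat Y^\times$, i.e.\ a point of $\hat Y$ away from $Y$. There $W'$ still acts freely, so both $\tilde N_Z$ and $e(W')\zeta_Z^* N$ are honest local systems in a neighborhood, and I claim $\tilde N_Z = e(W')\zeta_Z^* N$ there. By definition $\tilde N_Z = M_Z \cap e(W')\zeta_Z^* N$ inside $N_Z$, so the inclusion $\subseteq$ is automatic. For the reverse inclusion it suffices to check that, over $\hat Y^\times$, every local section of $e(W')\zeta_Z^* N$ lies in $M_Z$; but we are told $\bC_{an}[\hat Y^\times]\otimes_{\bC_{an}[\hat Y/W']} M_Z = N_Z$, and $N_Z$ was defined by applying $e(W')$ to $\bC_{an}[\hat Y^\times/W']\otimes N$. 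Unwinding, $M_Z$ already contains all meromorphic sections of $N_Z$ generated over $\bC_{an}[\hat Y]$, so over the open locus $\hat Y^\times$ where there is no pole the two sheaves agree. Gluing over a cover of $Z$ gives the global statement on $X^{\mathrm{reg}-W'}/W'$ minus $Z$ and the other components of $X^{W'}$.

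Next I would prove $\cal{O}$-coherence of $\tilde N_Z$ on $X^{\mathrm{reg}-W'}/W'$ with the other components of $X^{W'}$ removed. Coherence away from $Z$ follows from the restriction statement just proved, since there $\tilde N_Z \cong e(W')\zeta_Z^* N \cong N$ which is $\cal{O}$-coherent. So the crux is coherence near a generic point of $Z$. Here I would use the local model: over $\hat Y/W'$, the module $M_Z$ is already $\bC_{an}[\hat Y]$-coherent by construction (it is the $\bC_{an}[\hat Y]$-span of finitely many meromorphic sections, finiteness coming from the fact that the $0$-generalized eigenspace for the $\fk{h}_{W'}$-action is finite-dimensional — this is exactly where essential surjectivity of KZ for the finite-dimensional Hecke algebra of the cyclic group $W'$, via Theorem 5.14 of \cite{GGOR}, is used). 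Then $\tilde N_Z$ is a subsheaf of the $\bC_{an}[\hat Y]$-coherent sheaf $M_Z$; since $\bC_{an}[\hat Y]$ is Noetherian, $\tilde N_Z$ is coherent over $\bC_{an}[\hat Y]$, hence $\cal{O}$-coherent, near the generic point of $Z$. Passing back through the étale map $\zeta_Z$ and the isomorphism \eqref{eq:isom} transports this to $\cal{O}_{X^{\mathrm{reg}-W'}/W'}$-coherence.

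The main obstacle is the subtle point in the second half: a priori, the intersection $M_Z\cap e(W')\zeta_Z^*N$ being a coherent submodule of $M_Z$ requires knowing $M_Z$ itself is coherent, and this in turn rests on the finite-dimensionality of the relevant $0$-eigenspace — which is really a statement about the cyclic-group rational Cherednik algebra and the finiteness of its Hecke algebra. One must also be careful that the intersection is taken compatibly as one varies over a cover of $Z$, so that the local coherent pieces glue; this is where the hypothesis that $\hat Y$ meets $Z$ but no other component of $X^{W'}$ is essential, and it is why the statement is only claimed on the complement of those other components.
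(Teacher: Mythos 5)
Your first half (that the membership condition defining $\tilde N_Z$ is vacuous off $Z$, because $\bC_{an}[\hat Y^\times]\otimes_{\bC_{an}[\hat Y/W']}M_Z=N_Z$ makes the stalks of $M_Z$ and $N_Z$ agree after analytic localization at points of $\hat Y^\times$) is essentially correct. The gap is in the coherence argument near $Z$. You claim that $\tilde N_Z$ is a subsheaf of the $\bC_{an}[\hat Y]$-coherent sheaf $M_Z$ and is therefore coherent ``since $\bC_{an}[\hat Y]$ is Noetherian, hence $\cal O$-coherent.'' This fails on three counts. First, $\tilde N_Z=M_Z\cap e(W')\zeta_Z^*N$ consists of \emph{algebraic} sections of $e(W')\zeta_Z^*N$ whose analytic germs along $Y$ lie in $M_Z$; it is not stable under multiplication by arbitrary elements of $\bC_{an}[\hat Y]$, so it is not a $\bC_{an}[\hat Y]$-submodule of $M_Z$ and the ``submodule of a finite module'' argument does not apply. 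Second, the ring of global holomorphic functions on the open Stein set $\hat Y$ is not Noetherian, so even the stated form of the finiteness argument is unavailable (one would have to work with coherence of the analytic structure sheaf instead). Third, and most importantly, what the lemma asserts is coherence over the \emph{algebraic} structure sheaf $\cal{O}_{X^{\mathrm{reg}-W'}/W'}$, and analytic coherence of a lattice inside $N_Z$ does not by itself give an algebraic coherent extension across $Z$; this algebraic--analytic comparison is precisely the content of the lemma, and your proposal never engages with it.

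The missing input is the regular-singularities hypothesis on $N$ along $Z$. On an affine chart $V$ meeting $Z$ (with $\varpi$ a local equation of $Z$), moderate growth of flat sections shows that every algebraic section of $e(W')\zeta_Z^*N$ over $V\setminus Z$ lies in $\varpi^{-k}M_Z$ for a single $k$, so that $\tilde N_Z|_V$ is sandwiched between $\varpi^{k}$ times a coherent sheaf and a coherent sheaf (equivalently, it is identified with a Deligne-type canonical extension determined by the $0$-generalized eigenvalue normalization built into $M_Z$); finite generation over the Noetherian affine coordinate ring of $V$ then follows, and restricting to $e(W')\zeta_Z^*N$ off $Z$ is as in your first half. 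This is exactly what the proof of Lemma 3.2 of \cite{Lo14} supplies, and the paper's own proof consists of running that argument over an affine open cover of $\left(X^{\mathrm{reg}-W'}/W'-X^{W'}\right)\cup Z$; to repair your write-up you should either cite that proof or carry out the moderate-growth/canonical-extension argument explicitly, since without it the step from the analytic lattice $M_Z$ to $\cal O$-coherence of $\tilde N_Z$ does not go through.
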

\begin{proof}
Apply the proof of Lemma 3.2 of \cite{Lo14} to an affine open cover of $\left( X^{\mathrm{reg}-W'}/W' - X^{W'} \right) \cup Z$.
\end{proof}

Now exactly as in \cite{Lo14}, we are able to produce from the $\tilde{N}_Z$ a module $\tilde{N} \in H_{c,\eta}(W,X^{\mathrm{sr}})-\mathrm{mod}_{\mathrm{coh}}$, where $X^{\mathrm{sr}} = X^{\mathrm{reg}} \cup \bigcup_{(Z,s)\in S} (Z \cap X^{\mathrm{reg}-W_Z})$ is the subregular locus.  Let $\iota_Z:X^{\mathrm{reg}-W_Z}\to X^{\mathrm{sr}}$ and $\iota:X^{\mathrm{reg}}\to X^{\mathrm{sr}}$ be the inclusions and $\pi_Z:X^{\mathrm{reg}-W_Z} \to X^{\mathrm{reg}-W_Z}/W_Z$ and $\pi:X \to X/W$ the quotient maps.  Write $\tilde{N}_Z$ to refer, by abuse of notation, to the corresponding $\zeta_Z^* H_{c,\eta}(W,X)$-module under the isomorphism of Lemma \ref{isom}, and note that $\iota_{Z*} \pi_Z^* \tilde{N}_Z \subset \iota_* \pi^* N$.  Put $\hat{N} = \bigcap_Z \iota_{Z*} \pi_Z^* \tilde{N}_Z$, an $\cal{O}$-coherent sheaf on $X^{\mathrm{sr}}$.  Finally set $\tilde{N} = \pi_*(\hat{N})^W$.

\begin{proof}[Completion of the proof of Theorem \ref{thm:main}]
  The complement of $X^{\mathrm{sr}}$ in $X$ has codimension at least 2, so by Lemma 3.6 of \cite{Wi11}, there is an $M \in H_{c,\eta}(W,X)-\mathrm{mod}_{\mathrm{coh}}$ which restricts to $\tilde{N}$.  On an open affine set $U$ of $X/W$, we have $\Gamma(U,M) = \Gamma(U,\tilde{N})$.
\end{proof}

\begin{rem} \label{rem:RS}
We expect that we may take $M$ to be in the subcategory $H_{c,\eta}-\mathrm{mod}_{\mathrm{RS}}$.  This is easily seen to be true if $X$ is a curve, if $X$ is a proper variety of any dimension (since in this case $H_{c,\eta}-\mathrm{mod}_{\mathrm{coh}}=H_{c,\eta}-\mathrm{mod}_{\mathrm{RS}}$), or if $V$ is irreducible (since $KZ$ is exact).  The difficulty in the noncompact case arises from the fact that for some $V$, any $M$ will have composition factors that are supported on high codimension subvarieties, but are sandwiched between factors with full support.
\end{rem}

\section{Category $\scr{O}$ for the Cherednik algebra of a complex curve with finite group action}
\subsection{}
Let $X$ be a compact Riemann surface with a finite subgroup $W\subset Aut(X)$, and let $\Sigma = X/W$, regarded as an orbifold having special points $P_i, i=1,\ldots,m$ with stabilizers $\bZ_{n_i}$.  Let $g$ be the genus of $\Sigma$.

We would like to give necessary and sufficient conditions on the parameters $c = (c_{ij})_{i=1,\ldots, m; j=1,\ldots,n_i-1}, \eta$ so that the category $\scr{O}=H_{c,\eta}-\mathrm{mod}_{\mathrm{RS}}$ of $\cal{O}$-coherent modules over the sheaf of Cherednik algebras $H_{c,\eta}(W,X)$ is nonzero (according to Remark \ref{rem:RS} such modules have regular singularatites in the sense above since $X$ is compact).

The set-theoretic support of an irreducible module in $\scr{O}$ is either a single point $P_i$, or it is all of $X/W$.  By \cite{Et04} Proposition 2.22, there exists a module in $\scr{O}$ set-theoretically supported at $P_i$ if and only if there exist positive integers $a,b$ satisfying
\begin{equation} \label{eq:ptsupp}
  a = 2 \sum_{j=1}^{n_i-1} \frac{1-\zeta_i^{ja}}{1-\zeta_i^j} \zeta_i^{jb} c_{ij},
\end{equation}
where $\zeta_i$ is a primitive ${n_i}^\mathrm{th}$ root of unity.

Recall that we make the invertible linear transformation $(c,\eta)\mapsto \tau(c,\eta)$ by the formula
$$\tau_{ij} := \tau_{j P_i} = 2\pi \sqrt{-1} \left(2\sum_{k=1}^{n_i-1} c_{ij} \frac{1-\zeta_i^{jk}}{1-\zeta_i^{-k}} - \eta_i \right) /n_i.$$

Recall also that the Hecke algebra $\cal{H}_\tau(W,X)$ is generated over $\bC[\tau]$ by generators $T_i, i=1,\ldots,m; A_l, B_l, l=1,\ldots, g$ with defining relations
\begin{equation} \label{eq:eig}
  \prod_{j=1}^{n_i} (T_i-\zeta_i^j exp(\tau_{ij})) = 0,
\end{equation}
\begin{equation}
  \quad T_1 T_2 \cdots T_m = \prod_{l=1}^g A_l B_l A_l^{-1} B_l^{-1}.
\end{equation}
The functor $KZ: \scr{O} \to \cal{H}_\tau-\mathrm{mod}$ is essentially surjective by Theorem \ref{thm:main}, and its kernel is the Serre subcategory generated by irreducible modules supported at one of the points $P_i$.  Having already described when the kernel is nonzero, we have only to find conditions under which $\cal{H}_\tau$ has finite dimensional modules.

Suppose we have a representation of $\cal{H}_\tau$ given by $T_i, A_l, B_l \in GL_d$.  For each $i$, let 
\begin{equation} \label{eq:conj}
\alpha_{ij} = \rank((T_i - \zeta_i exp(\tau_{i1})I)\cdots(T_i-\zeta_i^j exp(\tau_{ij})I))
\end{equation}
for $j=1, \ldots, n_i$, $\alpha_{i0}=d$ for all $i$.  Then we must have
\begin{equation} \label{eq:ds}
1=\det(T_1)\cdots\det(T_m) = \prod_{i=1}^m \prod_{j=1}^{n_i} (\zeta_i^j exp(\tau_{ij}))^{\alpha_{i,j-1}-\alpha_{ij}}.
\end{equation}
In fact, if $g\geq 1$ then by a theorem of Shoda (see \cite{PW}), (\ref{eq:ds}) is the only obstruction to finding a representation with $T_i$ in the conjugacy classes defined by (\ref{eq:conj}).

We summarize this below.
\begin{thm}
  Suppose $X$ is compact and $g(X/W) \geq 1$.  The category $\scr{O}_{c,\eta}(W,X)$ is nonzero if and only if the parameters $(c,\eta)$ lie on one of the hyperplanes in the countable collection defined by (\ref{eq:ptsupp}) or (\ref{eq:ds}).
\end{thm}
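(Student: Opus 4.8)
The plan is to combine the essential surjectivity of $KZ$ (Theorem~\ref{thm:main}) with the description of $\ker KZ$ to reduce the question to the representation theory of $\cal{H}_\tau$. Since $KZ:\scr{O}\to\cal{H}_\tau\text{-mod}$ is essentially surjective and exact, the category $\scr{O}$ is nonzero if and only if either $\ker KZ\neq 0$ or $\cal{H}_\tau\text{-mod}\neq 0$ (i.e. $\cal{H}_\tau$ admits a nonzero finite-dimensional module). The first alternative, by \cite{Et04} Proposition 2.22 as recalled above, is exactly the condition that $(c,\eta)$ lie on one of the hyperplanes~(\ref{eq:ptsupp}); note these are genuine hyperplanes in the $(c,\eta)$ parameters since~(\ref{eq:ptsupp}) imposes a single linear condition on the $c_{ij}$ for each choice of $i$ and each pair $(a,b)$ of positive integers, giving a countable family. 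So the crux is to show that, when $g\geq 1$, the algebra $\cal{H}_\tau$ has a nonzero finite-dimensional representation if and only if $(c,\eta)$ lies on one of the hyperplanes defined by~(\ref{eq:ds}).

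For the ``if'' direction I would argue as follows. Suppose~(\ref{eq:ds}) holds for some choice of nonnegative integers $d=\alpha_{i0}\geq\alpha_{i1}\geq\cdots\geq\alpha_{in_i}$ (with the convention that $\alpha_{in_i}$ need not vanish, reflecting the eigenvalue structure~(\ref{eq:eig})). One first chooses, for each $i$, a $d\times d$ matrix $T_i$ whose generalized eigenvalue data realizes the prescribed ranks~(\ref{eq:conj}); concretely $T_i$ is a semisimple (or more generally block) matrix with eigenvalue $\zeta_i^j\exp(\tau_{ij})$ of multiplicity $\alpha_{i,j-1}-\alpha_{ij}$, so that~(\ref{eq:eig}) is automatically satisfied. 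The determinant identity~(\ref{eq:ds}) says precisely that $\det(T_1)\cdots\det(T_m)=1$, which is the determinant of the right-hand side $\prod_l A_l B_l A_l^{-1} B_l^{-1}$ for any choice of $A_l, B_l\in GL_d$. Then the cited theorem of Shoda (via \cite{PW}) guarantees, for $g\geq 1$, that one can actually find $A_l, B_l\in GL_d$ and conjugates of the $T_i$ within their prescribed conjugacy classes so that the single relation $T_1\cdots T_m=\prod_l A_lB_lA_l^{-1}B_l^{-1}$ holds — the product-of-commutators / fixed-conjugacy-class realization problem has no obstruction beyond the determinant when the genus is positive. This produces a nonzero $d$-dimensional $\cal{H}_\tau$-module, hence (pulling back along $KZ$, or simply observing $\ker KZ$ could also be empty) a nonzero object of $\scr{O}$.

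For the ``only if'' direction, suppose $\scr{O}\neq 0$ but $(c,\eta)$ lies on none of the hyperplanes~(\ref{eq:ptsupp}); then $\ker KZ=0$, so $\scr{O}\hookrightarrow\cal{H}_\tau\text{-mod}$ is fully faithful and $\cal{H}_\tau$ has a nonzero finite-dimensional module $(T_i,A_l,B_l)$ of some dimension $d$. Define $\alpha_{ij}$ by~(\ref{eq:conj}); since $\prod_{j=1}^{n_i}(T_i-\zeta_i^j\exp(\tau_{ij})I)=0$ by~(\ref{eq:eig}), the numbers $\alpha_{ij}$ are nonincreasing in $j$, and the multiplicity of the eigenvalue $\zeta_i^j\exp(\tau_{ij})$ of $T_i$ (suitably counted when eigenvalues coincide) is $\alpha_{i,j-1}-\alpha_{ij}$, whence $\det(T_i)=\prod_{j=1}^{n_i}(\zeta_i^j\exp(\tau_{ij}))^{\alpha_{i,j-1}-\alpha_{ij}}$. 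Taking the product over $i$ and using $\det\bigl(\prod_l A_lB_lA_l^{-1}B_l^{-1}\bigr)=1$ together with the relation in $\cal{H}_\tau$ yields~(\ref{eq:ds}) for this particular tuple $(\alpha_{ij})$. Thus $(c,\eta)$ satisfies~(\ref{eq:ds}), completing the equivalence; if instead $(c,\eta)$ does lie on a hyperplane~(\ref{eq:ptsupp}) there is nothing to prove.

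The main obstacle is making the Shoda-theorem application airtight: one must check that the degenerate features of the eigenvalue relations~(\ref{eq:eig}) — namely that the $n_i$ prescribed eigenvalues $\zeta_i^j\exp(\tau_{ij})$ need not be distinct, and that $\alpha_{in_i}$ need not be $0$ — are compatible with the hypotheses under which Shoda's criterion (as formulated in \cite{PW}) applies, and that the word ``conjugacy classes defined by~(\ref{eq:conj})'' is interpreted with the correct multiplicities (generalized eigenspace dimensions rather than ordinary ones). A secondary point to be careful about is that the collection of hyperplanes~(\ref{eq:ds}) ranges over all valid tuples $(d;\alpha_{ij})$ with $d\geq\alpha_{i1}\geq\cdots\geq\alpha_{in_i}\geq 0$ integers, so it is genuinely countable, and that~(\ref{eq:ds}) really does cut out a union of hyperplanes (not something more exotic) in the $(c,\eta)$-coordinates — this follows because $\tau_{ij}$ is an affine-linear function of $(c,\eta)$ and~(\ref{eq:ds}) says a fixed integer linear combination of the $\tau_{ij}$ lies in $2\pi\sqrt{-1}\,\bZ$, with the lattice shifts absorbed into the countability.
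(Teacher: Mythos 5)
Your argument is the same as the paper's: the theorem is proved there exactly by combining essential surjectivity of $KZ$ (Theorem \ref{thm:main}), the identification of $\ker KZ$ as the Serre subcategory of modules supported at the points $P_i$ (hence condition (\ref{eq:ptsupp})), the determinant computation showing necessity of (\ref{eq:ds}), and the theorem of Shoda (via \cite{PW}) showing that for $g\geq 1$ the determinant condition is the only obstruction to realizing the prescribed conjugacy classes in the surface group relation.

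The one genuine slip is your convention that $\alpha_{i n_i}$ ``need not vanish.'' Relation (\ref{eq:eig}) says the full product $\prod_{j=1}^{n_i}(T_i-\zeta_i^j e^{\tau_{ij}}I)$ is the zero matrix, so (\ref{eq:conj}) forces $\alpha_{i n_i}=0$ for every finite-dimensional $\cal{H}_\tau$-module, and the countable family of hyperplanes in (\ref{eq:ds}) must accordingly be taken only over tuples with $d=\alpha_{i0}\geq\alpha_{i1}\geq\cdots\geq\alpha_{i n_i}=0$. If you allow $\alpha_{i n_i}>0$, your construction in the ``if'' direction breaks: the multiplicities $\alpha_{i,j-1}-\alpha_{ij}$ sum to $d-\alpha_{i n_i}<d$, so no $T_i\in GL_d$ with spectrum contained in $\{\zeta_i^j e^{\tau_{ij}}\}$ realizes those ranks; worse, the tuple $\alpha_{ij}=d$ for all $j$ turns (\ref{eq:ds}) into the vacuous identity $1=1$, so with your enlarged family the ``if'' direction would assert $\scr{O}_{c,\eta}\neq 0$ for all parameters, which is false. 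Once $\alpha_{i n_i}=0$ is imposed (and the rank data are read as generalized-eigenspace data when the $\zeta_i^j e^{\tau_{ij}}$ coincide, as you note), your proof is correct and coincides with the paper's.
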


\begin{rem}
  It is easy to see that if $X/W$ is not compact then $\scr{O}$ is nonzero.  This is because the Hecke algebra is generated by elements $T_i,A_l,B_l$ as above, as well as elements $X_p$ corresponding to loops around points at infinity.  Here is one representation: let $T_i$ be any matrices which satisfy the appropriate polynomial relations (\ref{eq:eig}), and let $A_l,B_l$ be identity matrices.  Then we may choose the $X_p$ so that the product of the $T_i$ and the $X_p$ (in the correct order) is $I$.
\end{rem}
  
\subsection{Deligne-Simpson problem}
When $g=0$, the question of the existence of finite dimensional $\cal{H}_\tau$ modules is closely related to the Deligne-Simpson problem \cite{C-B}.  The latter asks when there exist solutions to $T_1 \cdots T_m = I, T_i\in C_i$ where $C_i\subset GL_n$ are the conjugacy classes defined by the equations 
\begin{equation} \label{eq:class}
\alpha_{ij} = \rank((T_i - \xi_{i1}I)\cdots(T_i-\xi_{ij}I))
\end{equation}
for $j=1, \ldots, n_i$.

We turn our attention now to the case when the universal cover $H$ of $X$ is the Euclidean or Lobachevsky plane.  If $W$ contains reflections, then $\cal{H}_\tau(W,X)$ is a GDAHA of rank 1, studied in \cite{EOR} and \cite{ER}.  In the Euclidean case there are four possibilities for $W$:
$$m=4, (n_1, n_2, n_3, n_4) = (2,2,2,2) \quad (\textrm{case } Q=D_4^{(1)}),$$
which yields Cherednik's DAHA of type $C^\vee C_1$, and
$$m=3, (n_1, n_2, n_3) = (3,3,3), (2,4,4), (2,3,6) \quad (\textrm{cases } Q=E_6^{(1)}, E_7^{(1)}, E_8^{(1)}).$$
Here $W$ is cyclic of order 2,3,4, or 6 respectively.

In general, let $Q$ be a star-shaped graph having $m$ legs coming out of the central node, of lengths $n_1-1, \ldots, n_m-1$.  $Q$ will be finite if $H$ is the Riemann sphere, affine if $H$ is the plane, and indefinite if $H$ is the hyperbolic plane.  To avoid redundant parameters we may take $\eta(P_1)=\eta, \eta(P_i)=0$ for $i\neq 1$.  

In the $D_4^{(1)}$ case we may write $t_i = exp(-\pi \sqrt{-1} c_i)$, $$q=exp(-\pi \sqrt{-1} (c_1 + c_2 + c_3 + c_4 - \eta))$$ and present $\cal{H}_\tau(W,X)$ as:
$$(T_i - t_i)(T_i + t_i^{-1}) = 0, \quad T_1T_2T_3T_4 = q.$$
More generally, for any GDAHA set
$$q =\prod_{i,j} \left(\zeta_i^j exp(\tau_{ij})\right)^{-1/n_i}.$$
It is shown in Section 5 of \cite{EOR} that if $Q$ is affine Dynkin and $q$ is a root of unity then $\scr{O}_{c,\eta}(W,X)$ is nonzero.  In type $D_4^{(1)}$, for $q$ not a root of unity \cite{OS} gives a precise description of the set of parameters for which $\scr{O}$ is nonzero.  We may do the same for a GDAHA of arbitrary type.

For this, we must view a tuple of integers $\alpha = (\alpha_{ij})$ as an element of the root lattice for the root system of type $Q$, where $\alpha_{ij}$ is the coefficient of the simple root corresponding to the $j^{\mathrm{th}}$ vertex on the $i^\mathrm{th}$ leg of the affine/indefinite Dynkin diagram $Q$.  For all $i$ we let $\alpha_{i0} = \alpha_0$ be the coefficient of the center vertex, and $\alpha_{i,n_i} = 0$.  Let us say that a root $\alpha$ is \emph{strict} if $\alpha_0>0$.  Let $\xi_{ij} = \zeta_i^j exp(\tau_{ij})$, and write $\xi^{[\alpha]} = \prod_{i=1}^m \prod_{j=1}^{n_i} \xi_{ij}^{\alpha_{i,j-1}-\alpha_{ij}}.$

Let us recall Theorem 1.3 of \cite{C-B} on the Deligne-Simpson problem.  Let the conjugacy classes $C_i$ and $\alpha$ be defined as in (\ref{eq:class}) and as before let $Q$ the star-shaped graph having $m$ legs coming out of the central node, of lengths $n_1-1, \ldots, n_m-1$.
\begin{thm}[Crawley-Boevey]
There is a solution to $T_1 \cdots T_m = I$ with $T_i \in \overline{C_i}$ if and only if $\alpha$ can be written as a sum of positive roots for $Q$, say $\alpha = \beta + \gamma + \ldots$ with $\xi^{[\beta]}=\xi^{[\gamma]}=\cdots = 1$.
\end{thm}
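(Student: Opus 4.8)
The plan is to reduce the existence of a matrix solution to a root-theoretic statement about indecomposable objects of an auxiliary category, following the strategy of \cite{C-B}. First I would set up the dictionary. A tuple $(T_1,\dots,T_m)$ of invertible matrices with $T_1\cdots T_m=I$ is the monodromy of a local system on $\mathbb{P}^1$ with $m$ punctures; equivalently, orienting $Q$ with every arrow pointing at the central node, it is a module over the \emph{multiplicative preprojective algebra} $\Lambda^{\xi}(Q)$ of $Q$ with weight the tuple $\xi=(\xi_{ij})$, the multiplicative relation at the center being exactly $T_1\cdots T_m=I$. The conjugacy-class data enters through the dimension vector $\alpha$: the rank of $(T_i-\xi_{i1}I)\cdots(T_i-\xi_{ij}I)$ is the dimension at the $j$-th vertex of the $i$-th leg, and $T_i\in\overline{C_i}$ corresponds to allowing Jordan blocks to merge, i.e.\ to the inequalities $\rank\bigl((T_i-\xi_{i1}I)\cdots(T_i-\xi_{ij}I)\bigr)\le\alpha_{ij}$. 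Under this dictionary a solution \emph{with no common invariant subspace} is a \emph{simple} $\Lambda^{\xi}(Q)$-module of dimension vector $\alpha$ (equivalently, a stable parabolic bundle on $\mathbb{P}^1$ carrying an irreducible logarithmic connection), and the existence of such a connection forces a ``parabolic degree zero'' condition which, in these coordinates, is precisely $\xi^{[\alpha]}=1$. I would use whichever of the quiver or parabolic-bundle language turns out cleaner, but the combinatorics is the same.

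Granting the key input below, the ``only if'' direction is then immediate: given a solution, take a composition series of the corresponding local system; its factors are simple $\Lambda^{\xi}(Q)$-modules, so by the key input their dimension vectors are positive roots of $Q$ killed by $\xi^{[\,\cdot\,]}$, and, dimension vectors being additive in short exact sequences, they sum to $\alpha$. For ``if'', given $\alpha=\beta+\gamma+\cdots$ with each summand a positive root satisfying $\xi^{[\beta]}=\xi^{[\gamma]}=\cdots=1$, I would first refine the decomposition — by induction on height, using the key input to split any summand that is not itself the dimension vector of a simple module — so that every piece is realized by an irreducible solution, and then take the block-diagonal direct sum, degenerating within the conjugacy-class closures if necessary; the point is that the defining rank inequalities are preserved under direct sums and under limits, and matching characteristic polynomials is bookkeeping.

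The real content is the \textbf{key input}: $\Lambda^{\xi}(Q)$ has a simple module of dimension vector $\beta$ if and only if $\beta$ is a positive root of $Q$, $\xi^{[\beta]}=1$, and $\beta$ admits no nontrivial decomposition $\beta=\beta^{(1)}+\beta^{(2)}+\cdots$ into positive roots with every $\xi^{[\beta^{(k)}]}=1$ and $\sum_k p(\beta^{(k)})\ge p(\beta)$, where $p(\mu)=1-\tfrac12(\mu,\mu)$ for the symmetric Tits form of $Q$. This is the multiplicative analogue of Crawley--Boevey's theorem for deformed preprojective algebras, and I would prove it by the same mechanism: multiplicative middle convolution realizes the simple reflections of the Weyl group of $Q$ as weight-twisting self-equivalences of the relevant module categories that preserve the ``no common invariant subspace'' property, so one may transport $\beta$ either to a coordinate vector — whose simple modules, when they exist, are written down by hand — or into the fundamental region of the Tits cone, where simple modules are constructed directly as in the additive case; the decomposition obstruction appears exactly when some reflection fails to lower the height of $\beta$, which happens only at a wall $\xi^{[\beta]}=1$ along which $\beta$ genuinely splits. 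The main obstacle is making this reflection-functor calculus precise in the multiplicative rather than additive setting — that middle convolution is an equivalence on these categories, how the classes $\overline{C_i}$ transform under it, and termination of the induction — together with the Kac-type input that indecomposable objects occur exactly in positive-root dimension vectors, which itself rests on point-counting over finite fields or on deformation to the trivial object. Everything else is bookkeeping.
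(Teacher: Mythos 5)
This statement is not proved in the paper at all: it is Crawley--Boevey's theorem, recalled verbatim (Theorem 1.3 of \cite{C-B}) and used as a black box, so there is no internal argument for your attempt to be measured against. Measured as a standalone proof of the theorem, your proposal is a reasonable roadmap of the strategy that actually exists in the literature (multiplicative preprojective algebras, middle convolution, a Kac-type theorem), but it is not a proof. The entire content has been packed into your ``key input'' --- the characterization of dimension vectors of simple modules over the multiplicative preprojective algebra $\Lambda^{\xi}(Q)$ --- and your justification of it is only the assertion that the additive reflection-functor argument goes through ``by the same mechanism.'' That is precisely where the difficulty lives: making middle convolution into weight-twisting equivalences in the multiplicative setting, controlling when it is defined (invertibility constraints in the multiplicative relation, special treatment of eigenvalue $1$ versus $\xi_{ij}$ at leg vertices), proving the Kac-type statement that simples/indecomposables occur only at positive roots, and running the induction into the fundamental region are the substance of long papers of Crawley--Boevey and Crawley--Boevey--Shaw, not bookkeeping. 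You flag this yourself as ``the main obstacle,'' which is an honest admission that the proof is missing rather than sketched.

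There are also gaps in the reduction you treat as routine. For the ``only if'' direction you need that a tuple with $T_i\in\overline{C_i}$ (not $C_i$) still corresponds to a $\Lambda^{\xi}(Q)$-module of dimension vector exactly $\alpha$; this uses the nontrivial fact that the closed set cut out by the rank inequalities $\rank\bigl((T_i-\xi_{i1})\cdots(T_i-\xi_{ij})\bigr)\le\alpha_{ij}$ is the closure of $C_i$ (which requires an argument, including care about the ordering of repeated eigenvalues), and that dropping the injectivity/surjectivity of the leg maps is exactly what passing to the closure allows. For the ``if'' direction, your refinement induction needs a genuine termination and base-case argument (the key input only produces decompositions with a $p$-value inequality, and you must check each refined piece is strictly smaller and eventually realizable), and the claim that direct sums of elements of $\overline{C_i'}$ and $\overline{C_i''}$ land in $\overline{C_i}$ again rests on the closure-by-rank-inequalities description. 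None of these steps is fatal --- they are all carried out in \cite{C-B} --- but as written your argument defers every point of substance to results you have not established.
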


This theorem immediately gives us
\begin{thm}
The category $\scr{O}_{c,\eta}(W,X)$ is nonzero if and only if $\xi^{[\alpha]}=1$ for some strict positive root $\alpha$, or if the parameters $(c,\eta)$ lie on one of the hyperplanes in the countable collection defined by (\ref{eq:ptsupp}).
\end{thm}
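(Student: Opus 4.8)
The plan is to bootstrap from the reduction already recorded above together with Crawley-Boevey's theorem. Because $KZ\colon \scr{O}\to\cal{H}_\tau-\mathrm{mod}$ is exact, essentially surjective, and has kernel the Serre subcategory generated by the irreducibles set-theoretically supported at the points $P_i$, the category $\scr{O}_{c,\eta}(W,X)$ is nonzero if and only if \emph{either} $\ker KZ\neq 0$ \emph{or} $\cal{H}_\tau(W,X)$ admits a nonzero finite-dimensional module. The first alternative holds exactly when $(c,\eta)$ lies on one of the hyperplanes (\ref{eq:ptsupp}), by Proposition 2.22 of \cite{Et04}. So everything reduces to the claim that $\cal{H}_\tau$ has a nonzero finite-dimensional module if and only if $\xi^{[\alpha]}=1$ for some strict positive root $\alpha$ of $Q$.

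Since $g=0$, a $d$-dimensional $\cal{H}_\tau$-module is the same thing as a tuple $(T_1,\dots,T_m)\in GL_d^m$ with $\prod_{j=1}^{n_i}(T_i-\xi_{ij}I)=0$ for every $i$ and $T_1\cdots T_m=I$. Given such a tuple with $d>0$, put $\alpha_0=d$ and define the remaining $\alpha_{ij}$ by (\ref{eq:class}); submultiplicativity of rank gives $\alpha_{i,j-1}\geq\alpha_{ij}$, the relation (\ref{eq:eig}) forces $\alpha_{i,n_i}=0$, and $T_i$ lies in the conjugacy class $C_i$ carrying this rank data, hence in $\overline{C_i}$. Crawley-Boevey's theorem then writes $\alpha=\beta+\gamma+\cdots$ as a sum of positive roots of $Q$ with $\xi^{[\beta]}=\xi^{[\gamma]}=\cdots=1$; the central coefficients of the summands are nonnegative and sum to $\alpha_0=d>0$, so some summand $\beta$ is a strict positive root with $\xi^{[\beta]}=1$. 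Conversely, given a strict positive root $\alpha$ with $\xi^{[\alpha]}=1$, apply Crawley-Boevey's theorem to the single-term decomposition $\alpha=\alpha$ and the conjugacy classes $C_i$ cut out by (\ref{eq:class}): this produces $T_i\in\overline{C_i}$ with $T_1\cdots T_m=I$. Since $\alpha_{i,n_i}=0$, every matrix in $\overline{C_i}$ is killed by $\prod_{j=1}^{n_i}(x-\xi_{ij})$, so $(T_1,\dots,T_m)$ is an $\cal{H}_\tau$-module of dimension $\alpha_0>0$; combined with the previous paragraph this proves the theorem.

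There is little content here beyond citing Crawley-Boevey, and I do not expect a serious obstacle. The step that wants care is the dictionary between conjugacy-class closures and the Hecke eigenvalue relation: that $\overline{C_i}$ is detected by the rank inequalities attached to (\ref{eq:class}) with the ordering of the $\xi_{ij}$ fixed (so that repeated eigenvalues cause no ambiguity), and that such a closure consists precisely of the matrices annihilated by the degree-$n_i$ polynomial $\prod_{j=1}^{n_i}(x-\xi_{ij})$ when $\alpha_{i,n_i}=0$. One should also unwind that, under the invertible affine-linear substitution $(c,\eta)\mapsto\tau$, each equation $\xi^{[\alpha]}=1$ defines a countable union of parallel hyperplanes in $(c,\eta)$-space, so that the condition in the statement has the same shape as the one in the $g\geq 1$ theorem.
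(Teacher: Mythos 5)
Your argument is essentially the paper's: the paper deduces the theorem immediately from Crawley--Boevey's theorem together with the $g=0$ reduction already set up (KZ essentially surjective with kernel controlled by (\ref{eq:ptsupp}), and finite-dimensional $\cal{H}_\tau$-modules identified with tuples $T_1\cdots T_m=I$ satisfying (\ref{eq:eig})), and you have simply written out the two directions of that deduction. The one point you flag but leave open --- that for a strict positive root the rank data (\ref{eq:class}) is realized by nonempty conjugacy classes, i.e.\ is weakly decreasing along each leg starting from the central coefficient --- is also left implicit in the paper; it can be checked by a short reflection argument (if some difference along a leg were negative, reflecting at successive leg vertices pushes the violation outward until a positive root would need a negative coefficient at the leaf), with only the case of repeated eigenvalues $\xi_{ij}$ requiring any extra care.
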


\section{GDAHA of higher rank: the affine case}
Now let $X$ be an elliptic curve and $W$ a finite group of automorphsims of $X$ which contains reflections, corresponding to the quiver $Q=(Q_0,Q_1)$ as in the previous section.  Then the product $X^n$ carries the action of the wreath product $G=S_n \ltimes W^n$.  We will produce necessary conditions under which $\scr{O}_{c,\eta}(G,X^n)$ is nonzero.

The Hecke algebra $\cal{H}_\tau(G,X^n)$ is the GDAHA of rank $n$, introduced in \cite{EGO}.  As above we write $\tau_{ij} = \tau_{jZ}$ when $Z=P_i \times X^{n-1}$.  The same paper also introduces rational degenerations of these algebras, denoted $B_{n,\mu, \nu}$, which are related to symplectic reflection algebras for wreath products as follows.  Let $\Gamma$ be the finite subgroup of $SL_2(\mathbb{C})$ associated to $Q$ under the McKay correspondence and put $\mathbf{\Gamma}_n = S_n \ltimes \Gamma^n$ for the wreath product group.  Consider the SRA $H(\mathbf{\Gamma}_n)$, let $e\in \bC[\mathbf{\Gamma}_n]$ be the idempotent for the representation corresponding the nodal vertex of $Q$ under the McKay correspondence.  It is shown in Corollary 2.3.6 of \cite{EGO} that $e^{\otimes n}H(\mathbf{\Gamma}_n)e^{\otimes n} = B_{n,\mu,\nu}$.  Note also that $H(\mathbf{\Gamma}_n)$ is Morita equivalent to the Gan-Ginzburg algebra $A_{n,\mu, \nu}$, with an invertible transformation between the parameters for $H(\mathbf{\Gamma}_n)$ and $(\mu, \nu)$ explicitely given, for example, in \cite{E12} Section 2.2.

In what follows, we restrict to the case that the parameters $(\mu,\nu)$ are \emph{spherical}, in the sense that both the idempotent $e^{\otimes n}$ and the trivial (i.e., averaging) idempotent $p$ for $\mathbf{\Gamma}_n$ are spherical: $H(\mathbf{\Gamma}_n) = H(\mathbf{\Gamma}_n) e^{\otimes n} H(\mathbf{\Gamma}_n)$ and $H(\mathbf{\Gamma}_n) = H(\mathbf{\Gamma}_n) p H(\mathbf{\Gamma}_n)$.  According to Conjecture 5.1 of \cite{E12}, the spherical locus contains the complement of an explicit countable set of hyperplanes in the parameter space $\bC^{|Q_0| + 1}$.

Let $\fk{g}$ be the affine Kac-Moody algebra of type $Q$, $\omega_0$ the fundamental weight corresponding to the extending vertex, and $\delta$ the smallest positive imaginary root (see \cite{K} for details).  Let $L_{\omega_0}$ be the irreducible representation of $\fk{g}$ of highest weight $\omega_0$.  Consider the subalgebra $\fk{a}\subset \fk{g}$ generated by the Cartan $\fk{h}\subset \fk{g}$ and all root subspaces $\fk{g}_\beta$ for real roots $\beta = \sum_{i\in Q_0} b_i \alpha_i$ with $\sum_{i\in Q_0} b_i \mu_i \in \mathbb{Z}$.  Finally, let $L^{\fk{a}}_{\omega_0}$ be the $\fk{a}$-submodule of $L_{\omega_0}$ generated by the weight spaces $L_{\omega_0}[\sigma \omega_0]$ for $\sigma\in W$, the Weyl group of $\fk{g}$.

\begin{thm} Suppose that the parameters $(\mu,\nu)$ are spherical and related to $\tau = \tau(c,\eta)$ under the equations
  $$e^{\tau_{ij}} = exp(2\pi i (\gamma_{ij} - j/n_i)), \quad exp(\tau_{1 (X^n)^s})= - e^{-2\pi i \nu}$$ where $s\in S_n \subset G$ is a transposition,
  and where $$\gamma_{ij} = \sum_{p=1}^{j-1} \mu_{ip} + \mu_0/m + \xi_i, \quad \xi_1 + \cdots + \xi_m = 0.$$  Then $\scr{O}_{c,\eta}(G,X^n)$ is nonzero if the weight space $L_{\omega_0}^{\fk{a}} \cap L_{\omega_0}[\omega_0 - n \delta]$ is nonzero. \end{thm}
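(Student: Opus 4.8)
The plan is to produce a nonzero finite-dimensional $\cal{H}_\tau(G,X^n)$-module and then invoke essential surjectivity of $KZ$. Since $X^n$ is proper, $H_{c,\eta}-\mathrm{mod}_{\mathrm{RS}}$ coincides with $H_{c,\eta}-\mathrm{mod}_{\mathrm{coh}}$ by Remark \ref{rem:RS}, so Theorem \ref{thm:main} gives that $KZ: \scr{O}_{c,\eta}(G,X^n)\to\cal{H}_\tau-\mathrm{mod}$ is essentially surjective. Hence if $V\neq 0$ is \emph{any} finite-dimensional $\cal{H}_\tau$-module and $M$ is chosen with $KZ(M)\cong V$, then $M$ is a nonzero object of $\scr{O}_{c,\eta}(G,X^n)$. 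We do not need the description of the kernel of $KZ$ here: the only real issue is the bare existence of a nonzero finite-dimensional $\cal{H}_\tau$-module, which fails for generic $(c,\eta)$. So the entire task is to construct such a $V$.

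The first step is to pass to the rational side. Under the exponential dictionary between $\tau=\tau(c,\eta)$ and $(\mu,\nu)$ in the statement --- with the constraint $\sum_i\xi_i=0$ recording the $\bZ$-indeterminacy in $(c,\eta)\mapsto\tau$ and the factor $-e^{-2\pi i\nu}$ attached to the transposition class in $S_n\subset G$ --- I would show, using the structure theory of these algebras developed in \cite{EGO}, that a finite-dimensional module for the rational degeneration $B_{n,\mu,\nu}$ gives rise to a finite-dimensional module for $\cal{H}_\tau(G,X^n)$. Because $(\mu,\nu)$ is assumed spherical, Corollary 2.3.6 of \cite{EGO} identifies $e^{\otimes n}H(\mathbf{\Gamma}_n)e^{\otimes n}$ with $B_{n,\mu,\nu}$ and sphericality of $e^{\otimes n}$ promotes this to a Morita equivalence; combining it with the Morita equivalence between $H(\mathbf{\Gamma}_n)$ and the Gan--Ginzburg algebra $A_{n,\mu,\nu}$ recalled above, the three algebras $B_{n,\mu,\nu}$, $H(\mathbf{\Gamma}_n)$ and $A_{n,\mu,\nu}$ have nonzero finite-dimensional modules simultaneously (a Morita equivalence of $\bC$-algebras preserves finite-dimensionality of modules in both directions). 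Thus it suffices to produce a nonzero finite-dimensional module over the symplectic reflection algebra $H(\mathbf{\Gamma}_n)$ at the parameters attached to $\mu$.

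For this last point I would appeal to the relation between finite-dimensional modules over symplectic reflection algebras of wreath products $\mathbf{\Gamma}_n=S_n\ltimes\Gamma^n$ and the basic representation $L_{\omega_0}$ of the affine Kac--Moody algebra $\fk{g}$ attached to $\Gamma$ by the McKay correspondence --- the circle of results around \cite{E12} and the Nakajima-quiver-variety description of category $\scr{O}$ for wreath-product Cherednik algebras. The point is that the resonance conditions $\sum_{i\in Q_0}b_i\mu_i\in\bZ$ cutting out the subalgebra $\fk{a}\subset\fk{g}$ are precisely the ones built into $\gamma_{ij}=\sum_{p=1}^{j-1}\mu_{ip}+\mu_0/m+\xi_i$, so that the finite-dimensional modules of $H(\mathbf{\Gamma}_n)$ with the relevant central character are governed by the weight space $L_{\omega_0}^{\fk{a}}\cap L_{\omega_0}[\omega_0-n\delta]$, whose dimension bounds their number from below. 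Hence nonvanishing of this weight space forces a nonzero finite-dimensional $H(\mathbf{\Gamma}_n)$-module to exist; running the two reductions backwards produces the sought $V$ over $\cal{H}_\tau(G,X^n)$, and therefore $\scr{O}_{c,\eta}(G,X^n)\neq 0$.

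\textbf{The main obstacle.} I expect the hard part to be the first reduction above: checking that a finite-dimensional $B_{n,\mu,\nu}$-module genuinely yields a finite-dimensional $\cal{H}_\tau(G,X^n)$-module, and that the exponential substitution in the statement --- including the innocuous-looking $\xi_i$ and the transposition factor $-e^{-2\pi i\nu}$ --- is exactly the substitution under which this holds. This is the higher-rank, elliptic analogue of the standard passage between a trigonometric/elliptic Cherednik algebra and its rational form, and making it rigorous will require a PBW/filtration analysis of the GDAHA together with careful tracking of $(c,\eta)\mapsto\tau$ through the wreath product $G=S_n\ltimes W^n$, of exactly the flavour already used for the rank-one algebras in Sections 4--5. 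Once one is reduced to $H(\mathbf{\Gamma}_n)$ the Lie-theoretic input may be cited essentially as a black box, and the reduction via $KZ$ in the first paragraph is immediate.
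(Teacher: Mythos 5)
Your proposal is correct and follows essentially the same route as the paper: reduce via essential surjectivity of $KZ$ (and properness of $X^n$) to exhibiting a nonzero finite-dimensional $\cal{H}_\tau(G,X^n)$-module, get a finite-dimensional $H(\mathbf{\Gamma}_n)$-module from the nonvanishing weight space, truncate by $e^{\otimes n}$ (sphericity guaranteeing nonvanishing) to get a $B_{n,\mu,\nu}$-module, and pass from the rational degeneration to the GDAHA. The "main obstacle" you flag is precisely the functor of Proposition 4.2.2 of \cite{EGO}, which the paper simply cites, and the Lie-theoretic input you describe loosely is Theorem 1.2 of \cite{BL}.
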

  \begin{proof}
    If $L_{\omega_0}^{\fk{a}}[\omega_0 - n \delta]\neq 0$ then from Theorem 1.2 of \cite{BL} there is a finite dimensional representation $V$ of the SRA $H(\mathbf{\Gamma}_n)$.  Then $e^{\otimes n} V$ is a finite dimensional representation of $B_n$.  Now we apply the functor of Proposition 4.2.2 of \cite{EGO}.
  \end{proof}
  
\printbibliography
\end{document}